\documentclass[a4paper,11pt]{article}
\usepackage[margin=3cm]{geometry}

\usepackage[T1]{fontenc}
\usepackage[utf8]{inputenc}

\usepackage{amssymb,amsmath,amsthm}
\usepackage{enumitem}
\usepackage{hyperref}
\usepackage{cleveref}
\usepackage{caption}

\usepackage{mathtools}  % pour les \coloneqq :=
\usepackage{float}

\usepackage[draft,inline,nomargin,index]{fixme} % pour mettre plein de commentaires partout !
\fxsetup{theme=color,mode=multiuser}			% en couleur, avec ça, et à plusieurs !
\FXRegisterAuthor{flm}{aflm}{\color{blue}FLM}	% \flmnote{Note de FLM}
\FXRegisterAuthor{fh}{afh}{\color{red}FH}   % \afhnote{Note de FH}

\title{On the Rokhlin lemma for infinite measure-preserving bijections}
\author{Fabien Hoareau and François Le Maître}

\newcommand{\inv}{^{-1}}

\renewcommand{\leq}{\leqslant}
\renewcommand{\geq}{\geqslant}
\renewcommand{\epsilon}{\varepsilon}

\newcommand{\Z}{\mathbb{Z}}

\newcommand{\R}{\mathbb{R}}
\newcommand{\N}{\mathbb{N}}

\DeclareMathOperator{\Aut}{\mathrm{Aut}}

\DeclareMathOperator{\supp}{\mathrm{supp}}

\newtheorem{theorem}{Theorem}[section]

\newtheorem{lemma}[theorem]{Lemma}
\newtheorem{proposition}[theorem]{Proposition}

\theoremstyle{definition}

\newtheorem{definition}[theorem]{Definition}

\newtheorem{remark}[theorem]{Remark}

\newtheorem*{question}{Question}

\begin{document}

\maketitle

\begin{abstract}
  We study the Rokhlin lemma in the context of infinite measure-preserving bijections,
  and completely classify such bijections up to 
   $\lambda$-approximate 
  conjugacy, where $\lambda$ is the infinite measure which is preserved. 
  This sharpens
  the classical version of the Rokhlin lemma, which only provides such a classification 
  up to $\mu$-approximate conjugacy where $\mu$ is a probability measure equivalent
  to $\lambda$. 
\end{abstract}

{
		\small	
		\noindent\textbf{{MSC-classification:}}	
		 	 37A40. 
}

\tableofcontents
\section{Introduction}

In the probability measure-preserving (pmp) context, Rokhlin's lemma states that given
any aperiodic pmp bijection $T$ of a standard probability space $(X,\mu)$, for any $N\geq 1$ and $\epsilon>0$ 
there is a  Borel subset $A\subseteq X$ such that $A,T(A),\dots,T^{N-1}(A)$ are pairwise disjoint and 
\begin{equation}\label{eq: basic Rokhlin pmp}
\mu(X\setminus(A\sqcup T(A)\sqcup\cdots\sqcup T^{N-1}(A)))<\epsilon.
\end{equation}
It has the following fundamental consequence, often also called the Rokhlin lemma: 
all aperiodic pmp bijections are 
$\mu$-approximately conjugate, namely for all $\epsilon>0$ and all aperiodic 
pmp bijections $T_1$ and $T_2$, we can find another pmp bijection $S$ such that 
\begin{equation}\label{eq: almost conjugacy}
\mu(\{x\in X\colon ST_1S\inv(x)\neq T_2(x)\})<\epsilon. 
\end{equation}

In this note, our aim is to investigate versions of  the Rokhlin lemma in 
the infinite measure-preserving setup, but where we keep working with the infinite 
measure rather than replacing it with a finite measure equivalent to it.

So let us fix a standard $\sigma$-finite infinite measured space $(X,\lambda)$ and an aperiodic 
measure-preserving bijection $T$ of $(X,\lambda)$. 
Let us first point out that the most basic version of Rokhlin's lemma
(namely that given by Equation~\eqref{eq: basic Rokhlin pmp}) holds if one replaces 
$\lambda$ by $\mu$, for $\mu$ a probability measure equivalent to $\lambda$ (see Proposition~\ref{prop: basic Rokhlin} for details), which 
yields that all aperiodic infinite measure-preserving bijections are 
$\mu$-approximately conjugate (see Proposition~\ref{prop: mu approximate conjugacy}). 
However, it is not true that all aperiodic infinite measure-preserving
bijections are $\lambda$-approximately conjugate in the following sense.

\begin{definition}
	Two measure-preserving bijections $T_1$ and 
	$T_2$ of a standard infinite space $(X,\lambda)$ are 
	\textbf{$\lambda$-approximately conjugate} 
	if for all $\epsilon>0$
	we can find another measure-preserving bijection $S$ such that 
	\begin{equation}\label{eq: lambda almost conjugacy}
	\lambda(\{x\in X\colon ST_1S\inv(x)\neq T_2(x)\})<\epsilon. 
	\end{equation}
\end{definition}

The main reason why aperiodic infinite measure-preserving bijections can fail to be
$\lambda$-approximately conjugate is 
that this notion detects \emph{dissipativity}. Recall
that $T$ is dissipative when it is aperiodic and it admits a fundamental domain,
namely a Borel subset $D$ such that $D$ intersects almost every $T$-orbit exactly once.
Up to conjugacy by measure-preserving bijections, dissipative bijections 
are uniquely determined 
by the measure of their fundamental domains, and a model for 
a dissipative bijection whose fundamental domains have measure $d$ is given by
$x\mapsto x+d$ on $X=\R$ endowed with the Lebesgue measure. 
Our first observation is that any 
two dissipative bijections are $\lambda$-approximately conjugate if and only if
their fundamental domains have the same measure. 

More generally, Hopf's decomposition associates to every infinite measure-preserving bijection $T$ 
a unique (up to a null set) maximal $T$-invariant set $\mathfrak D_T$ 
such that the restriction of $T$ to $\mathfrak D_T$ is dissipative, and then 
the restriction of $T$ to $\mathfrak C_T$ is conservative 
(see Definition~\ref{def: conservativity} for the intrinsic
definition of conservativity).
We can now state our main result, 
which is a classification of aperiodic infinite measure-preserving 
bijections up to $\lambda$-approximate conjugacy. 

\begin{theorem}\label{thm: approx conj complete}
	Let $T_1,T_2 \in \Aut(X,\lambda)$ be aperiodic. 
	Then $T_1$ and $T_2$ are $\lambda$-approximately conjugate if and only if their dissipative parts 
	admit fundamental domains of equal measure.
	In particular if both $T_1$ and $T_2$ are conservative, then they are $\lambda$-approximately conjugate.
\end{theorem}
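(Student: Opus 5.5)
The plan is to prove the two directions separately, using the Hopf decomposition $X=\mathfrak C_{T_i}\sqcup\mathfrak D_{T_i}$. The classification of dissipative bijections (by the measure of a fundamental domain) and the basic Rokhlin lemma for a finite measure $\mu$ equivalent to $\lambda$ (Proposition~\ref{prop: basic Rokhlin}) will be used as black boxes. Write $d_i\in[0,+\infty]$ for the measure of a fundamental domain of $T_i$ restricted to $\mathfrak D_{T_i}$.

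\emph{Necessity.} I will find a quantity, computable from $T$ and robust under changing $T$ on a set of small $\lambda$-measure, that equals $d$. A natural candidate is the following. For a set $A$ of finite measure, let $A_n=A\cap T^{-n}A$. Then $\lambda(A\setminus \bigcup_{n\geq1}T^{-n}A)$ (the set of points of $A$ that never return to $A$) is at most the dissipative contribution. Equivalently, take $d(T)=\sup$ over $A$ of finite measure of $\lambda(\{x\in A: T^n x\notin A\ \forall n\geq 1\})$. For a conservative $T$ this set is null, and for a dissipative $T$ with fundamental domain $D$ we can take $A=D$, giving $\lambda(D)$. Upper bounds follow because the sets $W$ of points that never return are wandering, so $\lambda(W)\leq d$. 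The key point is the comparison: if $S T_1 S^{-1}$ and $T_2$ differ on a set $E$ with $\lambda(E)<\epsilon$, then $d(T_1)$ and $d(T_2)$ differ by at most $\epsilon$. I would instead use a cleaner invariant to get this, namely the maximal measure of a $T$-wandering set $W$, i.e.\ one with $T^nW\cap W=\emptyset$ for $n\neq0$; this maximal measure is exactly $d$. Given a wandering $W$ for $T_2$, I will modify it to a wandering set for $T':=ST_1S^{-1}$ by removing the orbit-pieces that pass through $E$. Since $T'$-orbit segments that avoid $E$ coincide with $T_2$-segments, the troublesome part should be controlled by $\lambda(E)$ through a counting argument along each orbit. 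I expect this bookkeeping to be the main obstacle, because a single point of $E$ can affect infinitely many points of $W$. A possible fix is to use wandering sets of the form $W=D\cap F$, with $F$ a finite union of bounded pieces, together with a first-entry argument. Letting $\epsilon\to0$ gives $d_1=d_2$.

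\emph{Sufficiency.} Suppose $d_1=d_2$. I will first conjugate so that $\mathfrak D_{T_1}=\mathfrak D_{T_2}$ and the two dissipative parts coincide. This is possible when the conservative parts have matching measures. Both conservative parts have infinite measure or are null, except in the degenerate finite case, and a conservative aperiodic part of finite positive measure can be absorbed. Infinite measure and nullity are not automatically matched, so I will handle the mismatch by a tail trick. Cut a dissipative tower of $T_2$ into two pieces, and observe that the model $x\mapsto x+d$ restricted beyond a far point contributes small error after gluing.

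It then remains to show that two conservative aperiodic bijections $C_1,C_2$ of an infinite space are $\lambda$-approximately conjugate. Here I would prove a $\lambda$-Rokhlin lemma for conservative $T$: for every $N$ and $\epsilon$ there is a Rokhlin tower of height divisible by $N$, with bases of varying heights, covering $X$ up to $\lambda$-measure $\epsilon$. I will build it from Kakutani towers over a set $B$ of finite measure. Using conservativity, almost every point returns to $B$, so the Kakutani skyscraper over $B$ covers the $T$-saturation of $B$, which is $X$ when we choose $B$ to meet almost every orbit. I then choose $B$ with $\lambda(B)$ small so that the columns are tall, and chop each column into blocks of length $N$, with leftover of measure at most $N\lambda(B)<\epsilon$. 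Two such partitions into columns of equal heights and matching measures can be matched by an $S$ mapping levels to levels. Since the height distributions can be made equal by splitting columns, $SC_1S^{-1}=C_2$ except on the tops and leftovers, which have measure $O(\epsilon)$.
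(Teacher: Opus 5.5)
\textbf{Conservative case.} Your sentence ``the height distributions can be made equal by splitting columns'' is the heart of the matter, and you give no argument for it. In infinite measure it is not routine. Since $\lambda(X)=\infty$, any column decomposition with heights bounded by $H$ has base measure at least $\lambda(X)/H=\infty$. The error you can make small is the total measure of the tops of Kakutani columns, which equals the measure of the base, and this forces unbounded heights. So you need $B_1,B_2$ of small measure with $\lambda(\tau_{B_1,T_1}^{-1}(n))=\lambda(\tau_{B_2,T_2}^{-1}(n))$ for \emph{every} $n$. The difficulty is that each split creating mass at height $n$ also creates mass at other heights.

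The paper does this in Lemmas~\ref{lem: return time expansion at level n}, \ref{lem: iterated return time expansion} and Theorem~\ref{thm: return time distribution}. It fixes targets $\lambda_n>\max_i\lambda(\tau^{-1}_{A_i,T_i}(n))$ with $\sum_n\lambda_n<\epsilon$. It then raises the mass at height $n$ to exactly $\lambda_n$ by cutting towers of height $\geq Kn+n+1$ into at least $K$ blocks of height $n$ plus one remainder of height in $\{n+1,\dots,2n\}$.
\begin{itemize}
\item Divergence of $\sum_N N\lambda(\tau^{-1}_{B,T}(N))$ supplies enough tall towers.
\item Taking $K$ large makes the remainders carry at most $1/K$ of the added mass, so $\lambda_{n+1},\dots,\lambda_{2n}$ are not overshot.
\item Heights below $n$ are untouched, so the induction over $n$ stabilises.
\end{itemize}
None of this is in your proposal. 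Chopping into blocks of length $N$ is a holdover from the probability case and does not help: each level of a height-$N$ Rokhlin tower covering most of $X$ has infinite measure, so the tops of such blocks are not small.

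\textbf{Mixed case and necessity.} The ``tail trick'' is not an argument. Take $T_1$ dissipative with $d>0$ and $T_2$ whose conservative part has infinite measure. No conjugacy can align the dissipative parts, so you must change $T_2$ on a small set to make it dissipative with a fundamental domain of measure $d$. The missing idea is the absorption lemma (Lemma~\ref{lem: dissipative part absorbs conservative part}):
\begin{itemize}
\item pick $B\subseteq\mathfrak C_{T_2}$ of small measure meeting every conservative orbit (Lemma~\ref{lem: small set meeting all orbits});
\item pick $A\subseteq D$ with $\lambda(A)=\lambda(B)$;
\item redefine $T_2$ on $A$ to map onto $B$, and on $T_2^{-1}(B)$ to map onto $T_2(A)$.
\end{itemize}
Each Kakutani column of the conservative part is thereby spliced into a dissipative orbit, and the result is dissipative with fundamental domain $D$. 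This handles finite and infinite conservative parts uniformly. Together with transitivity, it settles every case $d>0$ without needing the conservative case at all.

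Your necessity argument is on the right track, but the obstacle you anticipate disappears if you remove whole points rather than orbit pieces. A wandering set $W$ meets each orbit at most once. The set of $w\in W$ whose $T_2$-orbit meets $E$ is therefore the image of a subset of $E$ under a map that is piecewise a power of $T_2$, so it has measure at most $\lambda(E)$. On the remaining orbits $T'$ coincides with $T_2$, so those points stay wandering for $T'$. This is exactly Lemma~\ref{lem: condition on measure of dissipative fundamental domains}.
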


The key ingredient for the conservative case is that given any two aperiodic 
conservative transformations $T_1$ and $T_2$, 
there are two small subsets with respect to which $T_1$ and $T_2$ have the 
exact same return time distribution (see Theorem \ref{thm: return time distribution}). 
One can then match the Kakutani-Rokhlin towers associated
to $T_1$ and $T_2$ to construct the desired conjugation. 
When the dissipative parts are not 
trivial, we are able to ``absorb'' the orbits of the conservative part into those of the dissipative part, see Lemma \ref{lem: dissipative part absorbs conservative part}.

Going back to the conservative case and Theorem \ref{thm: return time distribution},
let us highlight that one cannot just specify a return time distribution and hope that
it shows up in any conservative aperiodic bijection. For instance,
a return time distribution with only even return times will force the even levels
of the Kakutani-Rokhlin towers to be $T^2$-invariant, so that if $T$ does have such a return time 
distribution, then $T^2$ is not ergodic. Nevertheless, it would be interesting 
to understand whether there is some return time distribution which shows up in any 
infinite measure-preserving conservative aperiodic bijection.

\begin{remark}
	As pointed out to us by Emmanuel Roy, 
	there exist infinite measure-preserving ergodic bijections
	whose return time distributions on finite measure sets always have infinite Shannon entropy
	(explicit examples can be found in \cite[Prop.~2.1]{aaronsonPredictabilityEntropyInformation2009} and	
	\cite{janvresseZeroKrengelEntropy2012}).
	In particular, if there is a universal return time distribution (namely, 
	a return time distribution that shows up in any conservative aperiodic transformation),
	then it must have infinite Shannon entropy. 
\end{remark}

Our paper is organized as follows: after presenting the setup 
and proving a version of Rokhlin's lemma in the preliminary Section~\ref{sec: prelim}, we prove Theorem~\ref{thm: approx conj complete} in 
Section~\ref{sec: proof of main}. We finally comment 
on possible extensions of our result to infinite measure-preserving actions
of amenable groups in Section~\ref{sec: questions amenable}.

\section{Preliminaries}\label{sec: prelim}

Let us fix once and for all a \emph{standard $\sigma$-finite infinite measure space} $(X,\lambda)$, namely 
a standard Borel space $X$ endowed with a $\sigma$-finite infinite atomless measure $\lambda$. 
Such a measured space is isomorphic to the real line endowed with the Lebesgue measure. 
We are interested in the group $\Aut(X,\lambda)$ of all measure-preserving bijections 
$T:X\to X$. As usual, we neglect what happens on null sets.

\begin{definition}\label{def: conservativity}

	A measure-preserving bijection $T\in\Aut(X,\lambda)$ is called 
	
	\begin{itemize}
	\item \textbf{aperiodic} when all its orbits are infinite;
	\item \textbf{dissipative} when there is a Borel subset $D\subseteq X$ such that 
	\[
	X=\bigsqcup_{n\in\Z}T^n(D),
	\]
	this is then equivalent to $T$ being aperiodic and admitting a Borel fundamental domain,
	namely a subset $D$ intersecting each $T$-orbit exactly once;
	\item \textbf{conservative} if for all Borel $A\subseteq X$ of positive measure, 
	every $x\in A$ eventually returns to $A$ via $T$, or in a formula: 
	\[
	A\subseteq \bigcup_{n\geq 1}T^{-n}(A).
	\]
	
	\end{itemize}
	
\end{definition}
Recall that the Hopf decomposition associates to any $T\in\Aut(X,\lambda)$ 
a unique $T$-invariant partition 
$X=\mathfrak C_T\sqcup \mathfrak D_T$ such that the restriction of $T$ to $\mathfrak C_T$
is conservative, while the restriction of $T$ to $\mathfrak D_T$ is dissipative. 
These respective restrictions are called the \textbf{conservative part} and the \textbf{dissipative part}
of $T$. 
Observe that all fundamental domains of the dissipative part of $T$
must have the same measure since one can be obtained from the other 
by translating piecewise by powers of $T$.

Finally, recall that the \textbf{support} of $T\in\Aut(X,\lambda)$ 
is the Borel set of all $x\in X$ such that $T(x)\neq x$. 

\begin{lemma}
	{\label{lem: small set meeting all orbits}}
	Let $(X,\lambda)$ be a standard infinite measure space, and $T \in \Aut(X,\lambda)$ be conservative and aperiodic on its support. 
	For any $\varepsilon > 0$ there exists a Borel subset $B$ of $\supp T$ 
	such that $B$ intersects all nontrivial $T$-orbits, and $\lambda( B ) < \varepsilon$.
\end{lemma}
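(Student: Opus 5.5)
Writing $Y=\supp T$, I would start by exhausting $Y$ by sets of finite measure and then take a small piece of each. Since $\lambda$ is $\sigma$-finite, write $Y=\bigsqcup_{k\geq 1} Y_k$ with $\lambda(Y_k)<\infty$.

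\emph{Key claim.} For every Borel $A\subseteq Y$ of finite measure and every $\delta>0$, there is a Borel $B_A\subseteq A$ with $\lambda(B_A)<\delta$ which meets every $T$-orbit that meets $A$. Granting this, set $B=\bigcup_k B_{Y_k}$ with $\lambda(B_{Y_k})<\varepsilon 2^{-k}$. Then $\lambda(B)<\varepsilon$, and $B$ meets every orbit meeting $Y$, i.e. every nontrivial orbit (up to a null set).

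To prove the claim I would use a Rokhlin-type lemma for the induced map. By conservativity, the first return map $T_A\colon A\to A$, $T_A(x)=T^{n_A(x)}(x)$ with $n_A(x)=\min\{n\geq 1: T^n(x)\in A\}$, is well defined almost everywhere and preserves the finite measure $\lambda|_A$. Its orbits are exactly the traces on $A$ of $T$-orbits, so they are infinite because $T$ is aperiodic on $Y$. Indeed, a finite $T_A$-orbit would make the $T$-orbit periodic: if $T_A^p(x)=x$ then $T^m(x)=x$ for some $m\geq 1$.

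Fix $N$ with $\lambda(A)/N<\delta$. I would then apply the marker/Rokhlin construction to the aperiodic finite-measure-preserving map $T_A$. This gives a Borel $C\subseteq A$ meeting every $T_A$-orbit, such that $C,T_A(C),\dots,T_A^{N-1}(C)$ are pairwise disjoint. Concretely, the standard Kakutani--Rokhlin argument produces a complete section whose return times are all at least $N$; one can take a maximal $N$-separated Borel set via the Kechris--Solecki--Todorcevic marker lemma, or argue as in the classical proof of Rokhlin's lemma. Then $N\lambda(C)\leq\lambda(A)$, so $\lambda(C)\leq \lambda(A)/N<\delta$, and we take $B_A=C$. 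Since $C$ meets every $T_A$-orbit, it meets every $T$-orbit that meets $A$.

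The main obstacle is producing this complete section with large return times, measurably, without any periodic leftover. The cleanest route is the marker lemma. Alternatively, I would prove it directly: pick $C_0\subseteq A$ of measure less than $\delta$ and positive measure, and consider its $T_A$-saturation. On the complement of that saturation (an invariant set), repeat the choice inside the remainder, using a maximality (exhaustion) argument on measure. This gives a countable disjoint union of pieces, each small relative to the measure of its saturation, whose saturations cover $A$. For that exhaustion, I need that every positive-measure invariant subset $A'\subseteq A$ contains a complete section for $T_A|_{A'}$ of measure at most $\lambda(A')/N$. That is exactly the Rokhlin tower statement for aperiodic pmp maps, and I would invoke it.
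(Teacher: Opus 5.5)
Your proof is correct, and its outline matches the paper's. You exhaust $\supp T$ by countably many pieces of finite measure, find in each piece a Borel set of measure below $\varepsilon 2^{-k}$ meeting every orbit through that piece, and take the union.

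The difference is the local step. The paper applies the marker lemma directly to the restriction $\mathcal R_n$ of the orbit equivalence relation to the piece $X_n$, which is aperiodic thanks to conservativity. This gives a decreasing sequence of complete sections with empty intersection, and the paper concludes by continuity from above of the finite measure on $X_n$. It never uses an induced map, nor invariance of the measure under any transformation.

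You instead pass to the induced map $T_A$. This costs some verification: $T_A$ must be an a.e.\ defined, $\lambda$-preserving, aperiodic bijection of $A$ whose orbits are the traces of $T$-orbits, which uses recurrence of both $T$ and $T^{-1}$. You then take a Borel maximal $N$-separated set $C$ for $T_A$. Such a set exists because the bounded-degree Borel graph joining $x$ to $T_A^i(x)$ for $0<|i|<N$ has finite Borel chromatic number. Its $N$ disjoint translates give the explicit bound $\lambda(C)\leq\lambda(A)/N$ by invariance of $\lambda$ on $A$. Your version is therefore quantitative and yields a section with $T_A$-return times at least $N$, while the paper's is shorter and purely qualitative.

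The fallback in your last paragraph is best dropped. The property you need there is not literally Rokhlin's tower lemma: a tower base need not meet every orbit, so an extra exhaustion over invariant sets is required. Moreover, Rokhlin's lemma for non-ergodic aperiodic maps is itself usually derived from the existence of small sets meeting almost every orbit, which is exactly what you are proving.
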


\begin{proof}
	Let $\varepsilon >0$. Since $\lambda$ is atomless and $\sigma$-finite we can write 
	$\supp T$ as 
	$ \supp T = \bigsqcup_{n \in \mathbb{N}} X_n$, with $\lambda(X_n)$ finite for all $n\in \mathbb{N}$. 
	By conservativity, for $\lambda$-almost all $x \in X_n$ the $T$-orbit 
	of $x$ intersects $X_n$ infinitely many times, in other words 
	after throwing away a null set, the restriction $\mathcal R_n$
	of the equivalence relation $\mathcal R_T=\{(x,T^kx)\colon x\in X, k\in\Z\}$ to $X_n$ is aperiodic. 
	By the marker lemma~\cite[Lem.~6.7]{KechrisMiller2004}, 
	for every $n$ there is a decreasing sequence $(B_{n,k})_k$ of Borel 
	subsets of $X_n$ which intersects every $\mathcal R_n$-class and 
	satisfies $\bigcap_k B_{n,k}=\emptyset$. In particular, since $\lambda(X_n)$ is finite, 
	for some large enough $k_n$ we have $\lambda(B_{n,k_n})<\epsilon 2^{-n-1}$. 
	It follows that $B\coloneqq \bigsqcup_n B_{n,k_n}$ is as desired.
\end{proof}

Given a conservative $T\in\Aut(X,\lambda)$ and $A\subseteq X$ Borel, we define the
return time function $\tau_{A,T}$ by: for all $x\in A$, 
\[
\tau_{A,T}(x)=\min\{n\geq 1\colon T^n(x)\in A\}.
\]
We then have that the $T$-saturation of $A$ decomposes as a \textbf{Kakutani-Rokhlin partition} as follows: 
\begin{equation}\label{eq: KR partition}
	\bigcup_{n\in\Z}T^n(A)=\bigsqcup_{n=1}^\infty\bigsqcup_{k=0}^{n-1}T^k(\tau_{A,T}\inv(n))
\end{equation}
For every $n\geq 1$, we say that $\bigsqcup_{k=0}^{n-1}T^k(\tau_{A,T}\inv(n))$ is 
a tower of height $n$, and that $T^k(\tau_{A,T}\inv(n))$ is the level $k$ of such a tower. 
Lemma~\ref{lem: small set meeting all orbits} and the notion of Kakutani-Rokhlin partition are all we need in order to prove a natural version of 
Rokhlin's lemma in the infinite measure-preserving setup.
The proof is a straightforward adaptation the probability measure case (as in \cite[Thm.~15.4]{glasnerErgodicTheoryJoinings2003}).

\begin{proposition}\label{prop: basic Rokhlin}
Given
any aperiodic measure-preserving bijection $T$ of a standard infinite measure space 
$(X,\lambda)$, for any $N\geq 1$ and $\epsilon>0$ 
there is a  Borel subset $A\subseteq X$ such that $A,T(A),\dots,T^{N-1}(A)$ are pairwise disjoint and 
\begin{equation}\label{eq: basic Rokhlin}
\lambda(X\setminus(A\sqcup T(A)\sqcup\cdots\sqcup T^{N-1}(A)))<\epsilon.
\end{equation}
\end{proposition}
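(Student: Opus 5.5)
Split $X$ along the Hopf decomposition $X=\mathfrak C_T\sqcup\mathfrak D_T$ and treat the two $T$-invariant pieces separately, each with error $\epsilon/2$. The union of the two resulting sets is again a Borel set $A$ with $A,T(A),\dots,T^{N-1}(A)$ pairwise disjoint, because both pieces are $T$-invariant.

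\emph{Dissipative part.} Choose a fundamental domain $D$, so that $\mathfrak D_T=\bigsqcup_{n\in\Z}T^n(D)$. Set $A_{\mathfrak D}=\bigsqcup_{m\in\Z}T^{mN}(D)$. Then $\bigsqcup_{k=0}^{N-1}T^k(A_{\mathfrak D})=\mathfrak D_T$ exactly, and the translates are pairwise disjoint because the sets $T^n(D)$ are. So the dissipative part contributes no error at all.

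\emph{Conservative part.} Restrict $T$ to $\mathfrak C_T$; it is conservative and aperiodic there. Fix $\delta>0$, to be chosen later, and apply Lemma~\ref{lem: small set meeting all orbits} to obtain $B\subseteq\mathfrak C_T$ meeting every orbit with $\lambda(B)<\delta$. Conservativity makes $\tau_{B,T}$ finite almost everywhere, and since $B$ meets every orbit, the Kakutani-Rokhlin partition \eqref{eq: KR partition} covers all of $\mathfrak C_T$. Write $B_n=\tau_{B,T}\inv(n)$. In a tower of height $n$, I take as base pieces the levels $k\in\{0,N,2N,\dots\}$ with $k+N\leq n$. Let $A_{\mathfrak C}$ be the union over all towers of these levels $T^k(B_n)$. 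Within each tower, the sets $T^j(A_{\mathfrak C})$ for $0\leq j<N$ land on distinct levels, and the towers are disjoint, so $A_{\mathfrak C},\dots,T^{N-1}(A_{\mathfrak C})$ are pairwise disjoint.

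What is left uncovered in a tower of height $n$ is at most $N-1$ top levels, each of measure $\lambda(B_n)$ by invariance of $\lambda$. The total uncovered measure is therefore at most
\[
\sum_n (N-1)\lambda(B_n)\leq (N-1)\lambda(B)<(N-1)\delta .
\]
Taking $\delta=\epsilon/(2N)$ finishes the argument.

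The main obstacle is this conservative step. Unlike the pmp case, the infinite total measure means tall towers cannot be used to control the error as a fraction of $\lambda(X)$. Instead, the bound must come directly from the measure of the base, and this is exactly why Lemma~\ref{lem: small set meeting all orbits} is needed: it supplies a base of arbitrarily small measure that still meets every orbit. One also has to check the null-set issues (aperiodicity on $\mathfrak C_T$, and almost every point lying in a tower), which follow from conservativity and from $B$ meeting all orbits.
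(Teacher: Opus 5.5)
Your proof is correct and follows the paper's argument essentially verbatim. You use the same Hopf splitting, the same set $\bigsqcup_{m\in\Z}T^{mN}(D)$ on the dissipative part, and on the conservative part a small base $B$ from Lemma~\ref{lem: small set meeting all orbits}, with $A_{\mathfrak C}$ made of the tower levels $k\equiv 0 \pmod N$ with $k+N\leq n$. Your bound $(N-1)\lambda(B)$ is slightly sharper than the paper's $N\lambda(B)$, and your description of the levels is cleaner than the paper's displayed formula, whose exponent $T^{qn}$ should read $T^{qN}$.
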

\begin{proof}
	On the dissipative part of $T$, if we let $D$ be a fundamental domain,
	we simply define the $\mathfrak D$-part of $A$ as 
	$A_{\mathfrak D}=\bigsqcup_{k\in\Z}T^{Nk}(D)$. 
	On the conservative part, take $\epsilon'=\frac{\epsilon} N$ and 
	let $B$ intersect every $T_{\restriction \mathfrak C_T}$-orbit and have measure 
	less than $\epsilon'$, as provided by Lemma~\ref{lem: small set meeting all orbits}. 
	We then have the Kakutani-Rokhlin partition 
	\[
	\mathfrak C_T=\bigsqcup_{n = 1}^{\infty} \bigsqcup_{k=0}^{n-1}T^k(\tau_{B,T}\inv(n)).
	\]
	It then suffices to take for $A_{\mathfrak C}$ the elements of the towers whose level
	is a multiple of $N$, and which still have $N-1$ levels above them, namely
	\[
	A_{\mathfrak C}=\bigsqcup_{n = N}^{\infty} \bigsqcup_{q=0}^{\lfloor\frac n N\rfloor-1}T^{qn}(\tau_{B,T}\inv(n))
	\]
	and let $A=A_{\mathfrak D}\sqcup A_{\mathfrak C}$, noting that 
	$X\setminus(A\sqcup T(A)\sqcup\cdots\sqcup T^{N-1}(A)))$ is contained in 
	$\bigcup_{k=1}^{N}T^{-k}(B)$ and hence has measure at most $N\cdot \frac \epsilon N=\epsilon$.
\end{proof}

\begin{remark}
	The above proposition is also true if one only require that $T$
	is non-singular: in this case one has to modify the above procedure 
	on the conservative part by finding $B$ intersecting all $T$-orbits such 
	that $\mu(T\inv(B)\cup\cdots\cup T^{-N}(B))<\epsilon$ 
	(see \cite[Thm.~1.5.9]{aaronsonIntroductionInfiniteErgodic1997} for the ergodic case).
\end{remark}

We finish this section by recalling how the above result yields $\mu$-approximate conjugacy 
of all aperiodic infinite measure-preserving bijections.

\begin{proposition}\label{prop: mu approximate conjugacy}
	Given any two aperiodic measure-preserving bijections $T_1$, $T_2$ of
	a standard infinite measure space $(X,\lambda)$ and a probability measure 
	$\mu$ equivalent to $\lambda$, for any $\epsilon>0$ there exists 
	$S\in \Aut(X,\lambda)$ such that 
	\begin{equation*}
	\mu(\{x\in X\colon ST_1S\inv(x)\neq T_2(x)\})<\epsilon. 
	\end{equation*}
\end{proposition}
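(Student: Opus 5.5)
The plan is to follow the classical pmp argument, with Proposition~\ref{prop: basic Rokhlin} as the engine and the finite measure $\mu$ used only to control errors. First choose $N$ large so that $2/N$ is small. This will be made precise below: we want the $\mu$-measure of a union of roughly two levels out of $N$ to be small. Since $\mu$ is not $T_i$-invariant, we cannot bound the $\mu$-measure of one level by $1/N$ directly. We will deal with this by absolute continuity instead. Because $\mu\ll\lambda$ and $\mu$ is finite, there is $\delta>0$ such that $\lambda(E)<\delta$ implies $\mu(E)<\epsilon/4$.

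The first step is to apply Proposition~\ref{prop: basic Rokhlin} to $T_1$ and to $T_2$ with this $N$ and error $\delta$. This gives Rokhlin bases $A_1$ and $A_2$ whose $N$-towers cover $X$ up to sets of $\lambda$-measure $<\delta$. The bases may have infinite $\lambda$-measure, which is harmless. Their errors $E_i=X\setminus\bigsqcup_{k<N}T_i^k(A_i)$ have $\mu(E_i)<\epsilon/4$.

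The second step builds the conjugacy. We want $S\in\Aut(X,\lambda)$ with $S(T_1^k(A_1))=T_2^k(A_2)$ for $k<N$ and $ST_1=T_2S$ on the first $N-1$ levels. We choose a measure-preserving bijection $S_0:A_1\to A_2$ and set $S=T_2^kS_0T_1^{-k}$ on $T_1^k(A_1)$. Because $\lambda$ is $T_i$-invariant, every level has the same $\lambda$-measure as its base, so this is measure-preserving. We then extend $S$ by some measure-preserving bijection $E_1\to E_2$. Such an $S_0$ exists as soon as $A_1$ and $A_2$ are isomorphic measure spaces, which happens exactly when $\lambda(A_1)=\lambda(A_2)$ (both infinite, or finite and equal). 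The same is needed for $E_1$ and $E_2$.

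The third step is to bound where the conjugacy fails. The set where $ST_1S^{-1}\neq T_2$ is contained in $S(T_1^{N-1}(A_1))\cup S(E_1)\cup T_2^{-1}(E_2)$. This is $T_2^{N-1}(A_2)\cup E_2\cup T_2^{-1}(E_2)$, and its $\mu$-measure must be small.

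The main obstacle is matching the measures, and then controlling the top level in $\mu$. To get $\lambda(A_i)=\infty$ and $\lambda(E_i)$ equal, I would modify the construction. I would use the freedom in Lemma~\ref{lem: small set meeting all orbits} and in the fundamental domain to ensure each base has infinite measure. I would then arrange $\lambda(E_1)=\lambda(E_2)$ by enlarging the smaller error set: remove some whole towers of finite measure from the tower of the other transformation. Enlarging is fine as long as the enlarged set still has $\lambda$-measure $<\delta$. I would also replace $T_2^{-1}(E_2)$ with a separate absolute continuity bound: choose the error of $T_2$ below $\delta'$, where $\delta'$ makes $\mu(T_2^{-1}(E))$ small for $\lambda(E)<\delta'$. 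This uses $\mu\circ T_2^{-1}\ll\lambda$ and finiteness.

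For the top level I would average over shifts. Replacing $A_2$ by $T_2^{j}(A_2)$ and cyclically relabeling gives $N$ different choices of which level is the top. The levels $T_2^{j}(A_2)$ for $j<N$ are disjoint and have total $\mu$-measure at most $1$. Hence some $j$ gives a top level of $\mu$-measure $\le 1/N$. To keep the rest of the tower structure valid, I would run the construction with towers of height $2N$ and choose the cut point within them, so that a genuine $N$-tower remains beneath the chosen top. Choosing $N>4/\epsilon$ then finishes the proof.
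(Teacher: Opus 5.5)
There is a genuine gap, located in the step that makes the top level $\mu$-small. Everything else follows the paper's architecture. You build height-$N$ Rokhlin towers for $T_1$ and $T_2$ whose complements are $\lambda$-small, hence $\mu$-small. You trim a finite-measure column to get $\lambda(E_1)=\lambda(E_2)$. You map base to base by a measure-preserving bijection and extend level by level. The bad set is then contained in $T_2^{N-1}(A_2)\cup E_2$.

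The top-level step is the only place where the non-invariance of $\mu$ really matters. ``Cyclically relabeling'' the levels is not legitimate, since $T_2^{N}(A_2)\neq A_2$ in general, and you rightly sense this. But your height-$2N$ repair does not work as described. Suppose you keep only the $N$ levels of a height-$2N$ tower lying beneath the chosen cut. The other $N$ levels each have the same $\lambda$-measure as the base, which is infinite because $2N\lambda(A')\geq\lambda(X)-\lambda(E')=+\infty$. So the complement of your $N$-tower has infinite $\lambda$-measure, and its $\mu$-measure is not controlled at all. As written, your plan never produces an $N$-tower that covers $X$ up to a $\lambda$-small set and has a $\mu$-small top.

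The missing observation is that for every $i\in\Z$, $T_2^i$ carries the whole Rokhlin configuration, tower and complement, to another one. The sets $T_2^i(A_2),\dots,T_2^{i+N-1}(A_2)$ are pairwise disjoint, being images of disjoint sets under a bijection. Their complement is $T_2^i(E_2)$, which has $\lambda$-measure exactly $\lambda(E_2)$ by invariance. It therefore still has $\mu$-measure $<\epsilon/4$ by your choice of $\delta$, and it still matches $E_1$ in $\lambda$-measure.

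So neither relabeling nor taller towers are needed. If $T_2^j(A_2)$ with $0\le j\le N-1$ is a level with $\mu(T_2^j(A_2))\le 1/N$ (your averaging argument), replace $A_2$ by $T_2^{j-N+1}(A_2)$, and that level becomes the top. Equivalently, as in the paper, pick $0\le j\le N-1$ such that the new top $T_2^{N-1+j}(A_2)$, among the $N$ pairwise disjoint sets $T_2^{N-1}(A_2),\dots,T_2^{2N-2}(A_2)$, has $\mu$-measure $\le 1/N$. Shifting only on the $T_2$ side, as you do, suffices. With this fix your argument is the paper's.

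Two smaller remarks:
\begin{itemize}
\item The bases automatically have infinite $\lambda$-measure, since $N\lambda(A_i)\ge\lambda(X)-\lambda(E_i)=+\infty$. There is nothing to arrange via Lemma~\ref{lem: small set meeting all orbits} or fundamental domains.
\item $T_2\inv(E_2)$ is not needed in the bad set, since $ST_1S\inv=T_2$ already holds on $S(T_1^k(A_1))$ for $k<N-1$. In any case $\lambda(T_2\inv(E))=\lambda(E)$, so no separate $\delta'$ is required.
\end{itemize}
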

\begin{proof}
	By absolute continuity there is $\epsilon'>0$ such that for all 
	$C\subseteq X$, $\lambda(C)<\epsilon'$ implies $\mu(C)<\epsilon/2$. 
	Let us fix $N$ such that $\frac 1N<\epsilon/2$. 
	Find $A_1$ as in Proposition~\ref{prop: basic Rokhlin} for $T_1$, 
	up to shrinking $A_1$ a bit we have the following equality:
	\[
	\lambda(X\setminus(A_1\sqcup T_1(A_1)\sqcup\cdots\sqcup T_1^{N-1}(A_1)))=\epsilon'.
	\]
	Notice that since $T$ preserves $\lambda$, 
	the above equality still holds
	if we replace $A_1$ by $T_1^k(A_1)$ for some $k\in\Z$.
	Moreover, since the first $N$ translates of $T_1^{N-1}(A_1)$ by $T_1$ are disjoint, 
	one of them has $\mu$-measure $<\frac 1N$. So up to replacing $A_1$
	by some well-chosen $T_1^{k}(A_1)$ where $0\leq k\leq N-1$, we may further assume $\mu(T_1^{N-1}(A_1))\leq\frac 1N$.

	Similarly, we can find $A_2\subseteq X$ such that $\mu(T_2^{N-1}(A_2))\leq\frac 1N$ and 
	\[
	\lambda(X\setminus(A_2\sqcup T_2(A_2)\sqcup\cdots\sqcup T_2^{N-1}(A_2)))=\epsilon'.
	\]
	%we can assume $\mu(T_2^{N-1}(A_2))\leq\frac 1N$. \fhnote{Je trouve cette réduction pas facile à suivre sans faire un dessin, mais j'ai pas vraiment trouvé de meilleure façon de le dire. Là même si dans les faits tout se passe bien, comme la décomposition qu'on a sous les yeux c'est $A_1\sqcup T_1(A_1)\sqcup\cdots\sqcup T_1^{N-1}(A_1)$, c'est un peu perturbant parce qu'on peut avoir l'impression qu'on revient en bas sans contrôle et que ça devient tout pourri...} 

	We then observe that both $A_1$ and $A_2$ must have infinite $\lambda$-measure,
	so as $(A_1, \lambda_{\restriction A_1})$ and $(A_2, \lambda_{\restriction A_2})$ are both standard $\sigma$-finite spaces, there is a measure-preserving bijection $\varphi: A_1\to A_2$. 
	We extend it to 
	\[
	\psi : A_1\sqcup T_1(A_1)\sqcup\cdots\sqcup T_1^{N-1}(A_1)\to A_2\sqcup T_2(A_2)\sqcup\cdots\sqcup T_2^{N-1}(A_2)
	\] 
	by letting for, for all $k\in\{0,...,N-1\}$ and
	$x\in A_1$: 
	\[
	\psi(T_1^k(x))=T_2^k(\varphi(x)), 
	\]
	and further extend $\psi$ to some $S\in\Aut(X,\lambda)$ 
	arbitrarily (which is possible since the complement 
	of $A_1\sqcup T_1(A_1)\sqcup\cdots\sqcup T_1^{N-1}(A_1)$ has the same measure $\epsilon'$ as the 
	complement of $A_2\sqcup T_2(A_2)\sqcup\cdots\sqcup T_2^{N-1}(A_2)$). 
	Then we do have $S(T_1(x))=T_2(S(x))$ except when 
	$x$ belongs to the complement of $A_1\sqcup T_1(A_1)\sqcup\cdots\sqcup T_1^{N-1}(A_1)$
	(which has $\mu$-measure $<\epsilon/2$)
	or to $T_1^{N-1}(A_1)$ (which has $\mu$-measure $\leq\frac 1N<\epsilon/2$).
\end{proof}

\section{Proof of the main result}\label{sec: proof of main}

The proof of Theorem \ref{thm: approx conj complete} 
is divided into three parts. In Section \ref{sec:return times},
we first show that any two aperiodic conservative 
infinite measure-preserving
bijections share a common return time distribution. 
Section \ref{sec: dissipative absorption} is devoted 
to our dissipative absorption lemma, implying
that any $T\in\Aut(X,\lambda)$
with nonempty dissipative part
is $\lambda$-approximately 
conjugate to its dissipative part. 
These two results are finally put together in
Section \ref{sec: proof of main for real this time}.

\subsection{Finding the right return times}\label{sec:return times}

Here is the key lemma, which allows us to modify the return time distribution
of any $B\subseteq X$ of finite measure intersecting all 
orbits.

\begin{lemma}
	{\label{lem: return time expansion at level n}}
	Let $T\in\Aut(X,\lambda)$ be conservative and aperiodic. 
	 Let $B\subseteq X$ such that $\lambda(B)<+\infty$ and $B$ intersects all $T$-orbits. 
	 Fix $n\in\N^\ast$, and a sequence $(\lambda_k)_{k \geqslant n}$ such that for all 
	 $k\geq n$, $\lambda_k > \lambda(\tau_{B,T}\inv(k))$. 
	 Then there exists $C\subseteq X$ disjoint from $B$ such that 
	 \begin{enumerate}[label=$\mathrm{(\roman*)}$]
		\item \label{cond: no change of prior return}For all $k<n$, we have $\tau_{B\sqcup C, T}\inv (k)=\tau_{B,T}\inv(k)$.
		\item \label{cond: change of return time n}$\lambda(\tau_{B\sqcup C, T}\inv (n))=\lambda_n$.
		\item \label{cond: not too much change after n}For all $k>n$, we still have $\lambda(\tau_{B\sqcup C,T}\inv(k))<\lambda_k$.
	 \end{enumerate}
\end{lemma}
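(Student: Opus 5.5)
The plan is to enlarge $B$ by adding points on intermediate levels of tall towers of the Kakutani--Rokhlin partition~\eqref{eq: KR partition} of $B$. Adding such points cuts each tall tower into several towers of height exactly $n$, plus one remainder tower whose height lies in $[n,2n)$. Only towers of height at least $2n$ will be cut, so no tower of height $<n$ is ever created. Throughout, write $B_k=\tau_{B,T}\inv(k)$.

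\textbf{Step 1 (the cutting operation).} Fix $k\geq 2n$ and a Borel $E\subseteq B_k$. Let $q_k=\lfloor k/n\rfloor-1\geq 1$ and $r_k=k-q_kn\in[n,2n)$. Put
\[
C_E=\bigsqcup_{j=1}^{q_k}T^{jn}(E).
\]
These sets are disjoint from $B$, since they sit in levels $1,\dots,k-1$ of the tower over $B_k$. In $B\sqcup C_E$, the points of $E,T^n(E),\dots,T^{(q_k-1)n}(E)$ have return time $n$. The points of $T^{q_kn}(E)$ have return time $r_k$. All other points of $B$ keep their return time, because every added point lies inside the towers over $E$. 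Consequently:
\begin{itemize}
\item the measure of $\{\tau=n\}$ increases by $(q_k+[r_k=n])\lambda(E)$;
\item for $n<m<2n$, the measure of $\{\tau=m\}$ increases by at most $\lambda(E)$;
\item the measure of $\{\tau=k\}$ decreases;
\item no return time below $n$ is affected.
\end{itemize}
Doing this simultaneously for several values of $k$ with sets $E_k\subseteq B_k$ gives the corresponding sums, since the towers over different $B_k$ are disjoint.

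\textbf{Step 2 (enough room).} Since $B$ meets all orbits, the saturation of $B$ is $X$, so $\sum_k k\lambda(B_k)=\lambda(X)=+\infty$. On the other hand $\sum_k\lambda(B_k)=\lambda(B)<\infty$. Since $q_k\geq k/n-2$, it follows that for every $K$,
\[
\sum_{k\geq K}q_k\lambda(B_k)=+\infty .
\]

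\textbf{Step 3 (choosing the cuts).} Let $\delta=\lambda_n-\lambda(B_n)>0$ and
\[
\eta=\min_{n<m<2n}\bigl(\lambda_m-\lambda(B_m)\bigr)>0,
\]
a minimum over finitely many positive numbers; if $n=1$ there is no constraint. Choose $K\geq 2n$ with $\sum_{k\geq K}\lambda(B_k)<\eta$.

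Now consider the one-parameter family obtained by cutting all of $B_k$ for $K\leq k<L$ and a subset of $B_L$ of measure $s\in[0,\lambda(B_L)]$. As $L$ and $s$ increase, this exhausts the tail. The resulting measure of $\{\tau_{B\sqcup C,T}=n\}$ is a continuous nondecreasing function of the parameter, because $\lambda$ is atomless and a subset of $B_L$ of any prescribed measure exists. This function starts at $\lambda(B_n)$ and is unbounded by Step 2. By the intermediate value theorem, some choice of $L$ and $s$ gives exactly $\lambda_n$, which is (ii).

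For (i), nothing below height $n$ changed. For (iii), heights $m\geq 2n$ only lost measure, so $\lambda(\tau_{B\sqcup C,T}\inv(m))\leq\lambda(B_m)<\lambda_m$. Heights $n<m<2n$ gained at most $\sum_{k\geq K}\lambda(B_k)<\eta$, so they stay below $\lambda_m$.

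\textbf{Main obstacle.} The delicate point is the bookkeeping of the remainder towers, which must neither fall below height $n$ (this would break (i)) nor overflow the heights in $(n,2n)$ (this would break (iii)). This is why we cut only towers of height $\geq 2n$, leaving a remainder of height in $[n,2n)$, and work in a tail of small total base measure. The infinite measure of $X$ is what guarantees the unbounded gain in Step 2.
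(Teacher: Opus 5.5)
Your proof is correct and follows the paper's strategy. Like the paper, you cut the tall Kakutani--Rokhlin towers over $B$ into blocks of height $n$ plus a short remainder, use $\sum_k k\lambda(\tau_{B,T}\inv(k))=+\infty$ to get enough room, and cut only part of the last base to hit $\lambda_n$ exactly. The one difference is how the spill into heights strictly between $n$ and $2n$ is controlled: you cut only towers whose bases lie in a tail of total measure less than $\eta$, which uses $\lambda(B)<\infty$. The paper instead cuts only towers of height at least $Kn+n+1$, so each cut adds at least $K$ times its base measure at height $n$, and the spill is bounded by $(\lambda_n-\lambda(\tau_{B,T}\inv(n)))/K$.
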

\begin{proof}
	Let us begin by observing that since $B$ intersects almost all $T$-orbits,
	the Kakutani-Rokhlin decomposition of the infinite measure space $X$ as 
	\[
	X=\bigsqcup_{N\geq 1}\bigsqcup_{k=0}^{N-1}T^k(\tau_{B,T}\inv(N))
	\]
	yields that 
	\[
	\sum_{N\geq 1} N\lambda(\tau_{B,T}\inv(N))=\lambda(X)=+\infty.
	\]
	Note that since $B$ has finite measure, all the terms in the above sum are finite. Let 
	\[\delta=\min \left\{ \lambda_{k}-\lambda(\tau_{B,T}\inv(k)) \; \middle| \; k\in\{n+1,\dots,2n\} \right\} >0.\]
	Fix $K\in\N$ such that 
\begin{equation}\label{eq: K large enough}	
	K\delta> \lambda_n-\lambda(\tau_{B,T}\inv(n)).
\end{equation}
	Then every tower of height $k\geq Kn+n+1$ can be decomposed as
	$\left\lfloor\frac{k-n-1} n\right\rfloor\geq K$ subtowers of height $n$ stacked onto each other
	plus one final top subtower whose height belongs to $\{n+1,\dots,2n\}$. 
	The plan is now to add the top of each of these height $n$ subtowers to 
	our set $B$ until we reach the desired return time distribution (the added elements correspond to the hatched set $C$ in Figure~\ref{fig: Approximate Conjugacy Lemma}). 
	
    \begin{figure}[!h]
	\centering
	\captionsetup{margin=1cm}
	\includegraphics[scale=0.57]{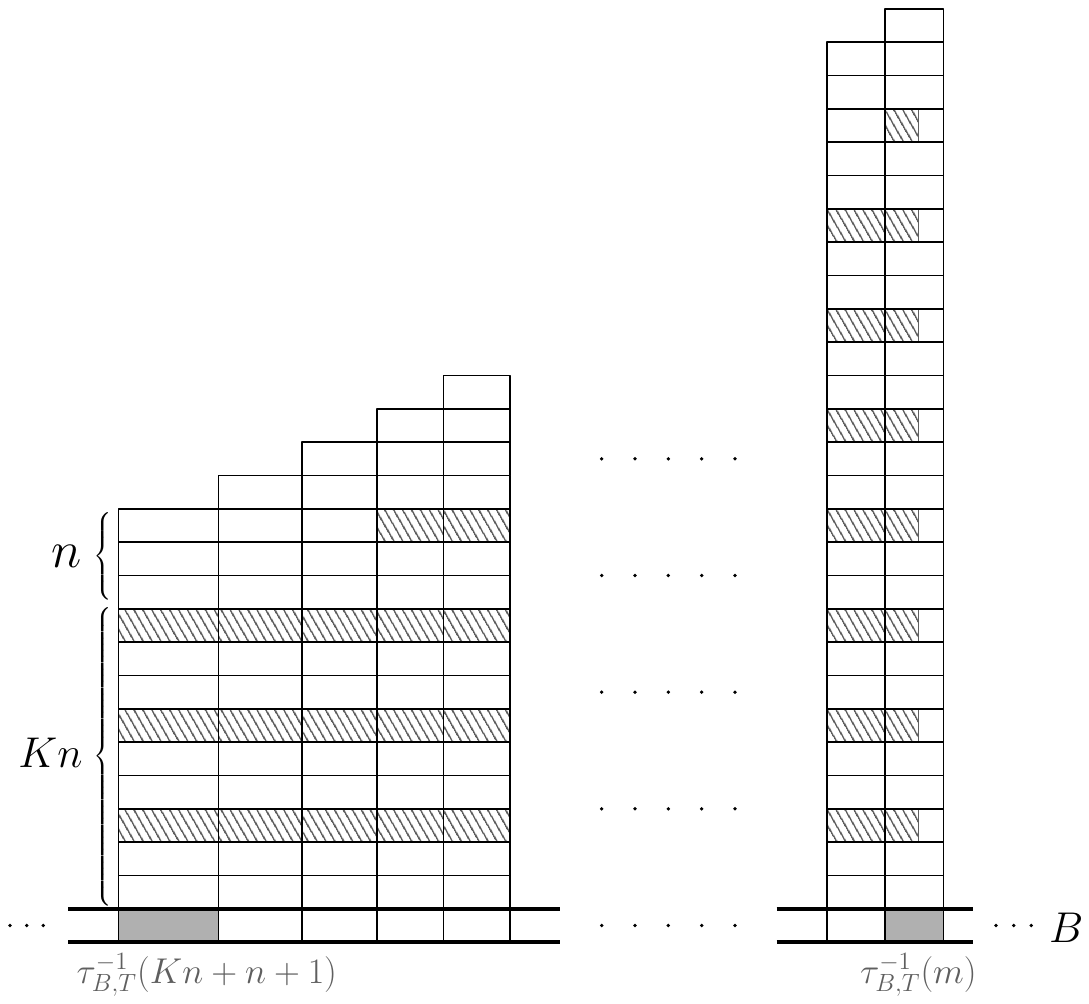}
	\caption{Illustration for $n=3$, where the hatched part represents $C$
	and $T$ translates every rectangle to the rectangle on top of it (except for the top rectangles, 
	which are only known to be taken back into $B$).}
    \label{fig: Approximate Conjugacy Lemma}
	\end{figure}
	
	Let us carry out this construction in details.
	Let $m$ be the first integer such that 
	\[
	\sum_{k=Kn+n+1}^m \left\lfloor \frac{k-n-1} n\right\rfloor \lambda(\tau_{B,T}\inv(k))\geq 
	\lambda_n-\lambda(\tau_{B,T}\inv(n)), 
	\]
	whose existence is guaranteed by the fact that 
	$\sum_{N\geq 1} N\lambda(\tau_{B,T}\inv(N))=+\infty$.
	For every $k\in\{Kn+n+1,\dots,m-1\}$ (which might be empty),
	we let \[
	C_k=\bigsqcup_{l=1}^{\left\lfloor \frac{k-n-1} n\right\rfloor}T^{nl}(\tau_{B,T}\inv(k)),
	\]
	which has measure \( \left\lfloor \frac{k-n-1} n\right\rfloor \lambda(\tau_{B,T}\inv(k))\),
	and we finally pick a subset $D\subseteq \tau_{B,T}\inv(m)$ satisfying
	\begin{equation}\label{eq: measure of D}
	\lambda(D) = \frac 1{\left\lfloor \frac{m-n-1} n\right\rfloor}
	\left(\lambda_n-\lambda(\tau_{B,T}\inv(n))-\displaystyle\sum_{k=Kn+n+1}^{m-1} \lambda(C_k)\right),
	\end{equation}
	whose existence is guaranteed by the choice of $m$, and let 
	\[ 
	C_m\coloneqq \bigsqcup_{l=1}^{\left\lfloor \frac{m-n-1} n\right\rfloor}T^{nl}(D).
	\]
	We then let \(C=\bigsqcup_{k=Kn+n+1}^m C_k\), as in Figure~\ref{fig: Approximate Conjugacy Lemma}. Since for every $k\geq Kn+n+1$ we have 
	$\left\lfloor \frac{k-n-1} n\right\rfloor\geq K$, the definition of $C_k$ yields
	that for all $k\in\{Kn+n+1,\dots,m-1\}$.
	\begin{equation}\label{eq: Ck is big}
	 K\lambda(\tau_{B,T}\inv(k))\leq \lambda(C_k).
	\end{equation}
	
	Similarly to Equation~\eqref{eq: Ck is big}, for $k=m$ we have 
	\begin{equation}\label{eq: Cm is big}
	K \lambda(D) \leq \lambda(C_m).
	\end{equation}

	Let us now understand how the return times are affected 
	when adding \(C\) to the set \(B\). 
	Observe that \(C\) is contained in the 
	Kakutani-Rokhlin towers over \(B\) whose heights belong to 
	\(\{Kn+n+1,\dots,m\}\), 
	in particular the return times of points belonging to the 
	bases of other towers are unaffected.

	Fix some \(k\in\{Kn+n+1,\dots,m-1\}\), let us compute 
	the return times of the elements of $B\sqcup C$ in the tower of height \(k\) over \(B\):
	\begin{enumerate}[label=(\alph*)]
	    \item \label{item: return time n}For each \(l\in\{0,\dots, \left\lfloor \frac{k-n-1} n\right\rfloor-1\}\), 
	 every \(x\in T^{nl}(\tau_{B,T}\inv(k))\subseteq C_k\sqcup B\), the
	\(B\sqcup C\)-return time of \(x\) is equal to \(n\), and
	    \item \label{item: return time autre}if \(x\in T^{n\left\lfloor \frac{k-n-1} n\right\rfloor}(\tau_{B,T}\inv(k))\subseteq C_k \) then the 
	\(B\sqcup C\)-return time of \(x\) is equal to 
	\[
	k-n\left\lfloor \frac{k-n-1} n\right\rfloor\in\{n+1,\dots,2n\}.
	\]
	\end{enumerate}
	For \(k=m\) we have similarly that 
	\begin{enumerate}[label=(\alph*')]
	    \item \label{item: return time n'}for each \(l\in\{0,\dots, \left\lfloor \frac{m-n-1} n\right\rfloor-1\}\), 
	and every \(x\in T^{nl}(D)\subseteq C_m \sqcup B\), the
	\(B\sqcup C\)-return time of \(x\) is equal to \(n\), while
	    \item \label{item: return time autre'} if \(x\in T^{n\left\lfloor \frac{m-n-1} n\right\rfloor}
	(D)\subseteq C_m\) its \(B\sqcup C\)-return time is equal to 
	\[m-n\left\lfloor \frac{m-n-1} n\right\rfloor\in\{n+1,\dots,2n\}.\]
	\end{enumerate}

	It follows from the above description of the new return times that 
	for all \(k<n\), we have \(\tau_{B\sqcup C,T}\inv(k)=\tau_{B,T}\inv(k)\), which 
	establishes Condition~\ref{cond: no change of prior return}.
	Moreover, it follows from \ref{item: return time n} and \ref{item: return time n'} that
	\[
	\tau_{B\sqcup C,T}\inv(n)=\tau_{B,T}\inv(n)\sqcup
	\left(\bigsqcup_{k=Kn+n+1}^{m}
	T^{-n}(C_k)\right),
	\]
	so that $\lambda(\tau_{B\sqcup C,T}\inv(n))=\lambda(\tau_{B,T}\inv(n))+\lambda(C)$.
	By Equation~\eqref{eq: measure of D} and the definition of $m$, 
	the last set $C_m$ used to change part of the height $m$ tower
	was chosen so as to obtain 
	\begin{equation}\label{eq: well chosen C}
	\lambda(C)=\lambda_n-\lambda(\tau_{B,T}\inv(n)),
	\end{equation}
	Condition~\ref{cond: change of return time n} 
	thus holds.
	Finally for all $k\in\{n+1,\dots,2n\}$, 
	the new elements with return time equal to $k$ must come from \ref{item: return time autre} or \ref{item: return time autre'}, 
	hence we have
	\[
	\tau_{B\sqcup C,T}\inv(k)\subseteq \tau_{B,T}\inv(k)\sqcup 
	\left(\bigsqcup_{l=Kn+n+1}^{m-1}T^{n\left\lfloor 
	\frac{l-n-1} n\right\rfloor}(\tau_{B,T}\inv(l))
	\right)
	\sqcup 
	T^{n\left\lfloor \frac{m-n-1} n\right\rfloor}(D).
	\]
	We thus have
	\begin{align*}
	\lambda(\tau\inv_{B\sqcup C,T}(k))-\lambda(\tau\inv_{B,T}(k))
	 & \leq \left(\sum_{l=Kn+n+1}^{m-1}\lambda(\tau_{B,T}\inv(k))\right) + \lambda(D)& \\
	& \leq \left(\sum_{l=Kn+n+1}^{m-1}\frac{\lambda(C_k)}{K} \right)+ \frac{\lambda(C_m)}{K} &(\text{by Equations }\eqref{eq: Ck is big}\text{ and }\eqref{eq: Cm is big}).
	\end{align*}
	Moreover, since $\displaystyle C=\bigsqcup_{k=Kn+n+1}^m C_k$, we have 
	$\displaystyle\lambda(C)=\sum_{k=Kn+n+1}^{m}\lambda(C_k)$, and hence 
	\[
	 \lambda(\tau\inv_{B\sqcup C,T}(k))-\lambda(\tau\inv_{B,T}(k))\leq \frac{\lambda(C)}K  =\frac{\lambda_n-\lambda(\tau\inv_{B,T} (n))}K  <\delta, 
	\]
	using first Equation~\eqref{eq: well chosen C}
	and then Inequality~\eqref{eq: K large enough}.
	It then follows from
	 the definition of $\delta$ that Condition~\ref{cond: not too much change after n}
	holds for $k\in\{n+1,\dots,2n\}$.
	To conclude, note that for all $k\geq 2n+1$ we have 
	$\tau_{B\sqcup C, T}\inv(k)\subseteq \tau\inv_{B,T}(k)$
	so that Condition~\ref{cond: not too much change after n}
	fully holds.
\end{proof}

\begin{lemma}
	{\label{lem: iterated return time expansion}}
	Let $T\in\Aut(X,\lambda)$ be conservative and aperiodic. 
	Let $A\subseteq X$ intersect almost all $T$-orbits. 
	Then for every sequence $(\lambda_n)_{n\geq 1}$ such that 
	$\lambda_n>\lambda(\tau_{A,T}\inv(n))$
	for all $n\geq 1$, there is $B$ such that $A \subseteq B$ and for all $n\in\N^\ast$,
	$\lambda(\tau_{B,T}\inv(n)) = \lambda_n$. 
\end{lemma}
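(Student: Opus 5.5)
The plan is to build $B$ as an increasing union $A=B_0\subseteq B_1\subseteq B_2\subseteq\cdots$, where $B_n$ is obtained from $B_{n-1}$ by one application of Lemma~\ref{lem: return time expansion at level n} at level $n$. Condition~\ref{cond: no change of prior return} of that lemma freezes the levels $k<n$. Condition~\ref{cond: change of return time n} fixes level $n$. Condition~\ref{cond: not too much change after n} keeps the hypothesis available for the next step.

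\textbf{A preliminary remark.} Lemma~\ref{lem: return time expansion at level n} assumes $\lambda(B)<+\infty$. Inspecting its proof, this is only used to guarantee that each $\lambda(\tau_{B,T}\inv(k))$ is finite, so that the differences $\lambda_k-\lambda(\tau_{B,T}\inv(k))$ make sense. The divergence $\sum_N N\lambda(\tau_{B,T}\inv(N))=+\infty$ comes from $\lambda(X)=+\infty$ and the Kakutani--Rokhlin partition, not from finiteness of $B$. In our situation, finiteness of each level is ensured by $\lambda(\tau_{A,T}\inv(k))<\lambda_k<+\infty$, and this inequality will be propagated along the induction. So I will apply the lemma in this slightly generalized form (alternatively, I would first check that the proof goes through verbatim).

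\textbf{Induction.} Set $B_0=A$. Suppose $B_{n-1}\supseteq A$ is built, intersects all orbits (automatic since it contains $A$), and satisfies:
\begin{itemize}
\item $\lambda(\tau_{B_{n-1},T}\inv(k))=\lambda_k$ for $k<n$;
\item $\lambda(\tau_{B_{n-1},T}\inv(k))<\lambda_k$ for $k\geq n$.
\end{itemize}
Apply Lemma~\ref{lem: return time expansion at level n} to $B_{n-1}$, level $n$ and the sequence $(\lambda_k)_{k\geq n}$. This gives $C_n$ disjoint from $B_{n-1}$, and we set $B_n=B_{n-1}\sqcup C_n$. Then condition~\ref{cond: no change of prior return} gives equality of sets $\tau_{B_n,T}\inv(k)=\tau_{B_{n-1},T}\inv(k)$ for $k<n$. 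Condition~\ref{cond: change of return time n} gives $\lambda(\tau_{B_n,T}\inv(n))=\lambda_n$, and condition~\ref{cond: not too much change after n} gives the strict inequalities for $k>n$. So the induction proceeds. In particular, for each fixed $n$ the sets $\tau_{B_J,T}\inv(n)$ are all equal for $J\geq n$.

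\textbf{Passing to the limit.} Let $B=\bigcup_J B_J$. The main point is to show $\tau_{B,T}\inv(n)=\tau_{B_n,T}\inv(n)$ as sets; this is where care is needed, since points added at later stages could in principle shorten return times.

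If $x\in\tau_{B_n,T}\inv(n)$, then $x\in\tau_{B_J,T}\inv(n)$ for every $J\geq n$. Hence $T^ix\notin B_J$ for $0<i<n$ and all $J$, so $T^ix\notin B$ for these $i$, while $x,T^nx\in B_n\subseteq B$. Thus $\tau_{B,T}(x)=n$.

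Conversely, suppose $x\in B$ and $\tau_{B,T}(x)=n$. Choose $J\geq n$ with $x,T^nx\in B_J$. Since $B_J\subseteq B$, there are no earlier visits to $B_J$, so $x\in\tau_{B_J,T}\inv(n)=\tau_{B_n,T}\inv(n)$.

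Hence $\lambda(\tau_{B,T}\inv(n))=\lambda_n$ for all $n\geq 1$, and $A\subseteq B$, which is the statement.
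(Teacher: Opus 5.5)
Your proof is correct and follows the paper's approach: inductively apply Lemma~\ref{lem: return time expansion at level n} at level $n$ to $A\sqcup C_1\sqcup\cdots\sqcup C_{n-1}$, use condition (i) to freeze the lower levels, and take the union. You also add two details the paper leaves implicit, both correctly: the verification that $\tau_{B,T}\inv(n)=\tau_{B_n,T}\inv(n)$ in the limit, and the observation that the hypothesis $\lambda(B)<+\infty$ in Lemma~\ref{lem: return time expansion at level n} is only used to make each level finite, which your induction guarantees even when $\lambda(A)=+\infty$.
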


\begin{proof}
	Lemma~\ref{lem: return time expansion at level n} allows us to inductively build a
	sequence of disjoint subsets $(C_n)_{n\geq 1}$ such that for all $n\geq 1$, the following hold: 
	 \begin{enumerate}[label=$\mathrm{(\roman*)}$]
		\item For all $k<n$, we have $\tau_{A\sqcup C_1\sqcup\cdots\sqcup C_n, T}\inv (k)=\tau_{A\sqcup C_1\sqcup\cdots\sqcup C_{n-1},T}\inv(k)$.
		\item $\lambda(\tau_{A\sqcup C_1\sqcup\cdots\sqcup C_n, T}\inv (n))=\lambda_n$.
		\item For all $k>n$, we have $\lambda(\tau_{A\sqcup C_1\sqcup\cdots\sqcup C_n,T}\inv(k))<\lambda_k$.
	 \end{enumerate}
	 It is then a straightforward consequence of (i) and (ii) that the set 
	 $B\coloneqq A\sqcup \bigsqcup_{n\in\N}C_n$ is as desired.
\end{proof}

We can now state and prove the core result of our paper. 

\begin{theorem}\label{thm: return time distribution}
	Let $T_1,T_2 \in \Aut(X,\lambda)$ be conservative aperiodic. 
	For any $\varepsilon>0$ there exists two subsets $B_1$ and $B_2$ of $X$ meeting almost all $T_1$ and $T_2$-orbits respectively, of measure $\lambda(B_1) = \lambda(B_2) < \varepsilon$, and such that for any $n \in \N^\ast$ we have: $\lambda(\tau\inv_{B_1,T_1}(n)) = \lambda(\tau\inv_{B_2,T_2}(n))$. 
\end{theorem}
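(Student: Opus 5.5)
Start from Lemma~\ref{lem: small set meeting all orbits} applied to $T_1$ and $T_2$ (both conservative and aperiodic, so their support is all of $X$). This gives Borel sets $A_1,A_2$ meeting almost every $T_1$-orbit, respectively $T_2$-orbit, with $\lambda(A_i)<\varepsilon/2$.

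The plan is then to choose one target sequence $(\lambda_n)_{n\geq 1}$ which strictly dominates both return time distributions, and whose total mass is still less than $\varepsilon$. We then apply Lemma~\ref{lem: iterated return time expansion} to each transformation separately with this common sequence.

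\textbf{Step 1: the target sequence.} Write $a_n^i=\lambda(\tau_{A_i,T_i}\inv(n))$. Kac's formula, or simply the fact that $\tau_{A_i,T_i}$ is finite almost everywhere on $A_i$ by conservativity, gives $\sum_n a_n^i=\lambda(A_i)<\varepsilon/2$. Let $\eta=\varepsilon-\lambda(A_1)-\lambda(A_2)>0$ and set
\[
\lambda_n=\max(a_n^1,a_n^2)+\eta 2^{-n-1}.
\]
Then $\lambda_n>a_n^i$ for $i=1,2$ and all $n$. Moreover
\[
\sum_n\lambda_n\leq \sum_n a^1_n+\sum_n a^2_n+\eta/2<\varepsilon.
\]

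\textbf{Step 2: expanding to the common distribution.} Lemma~\ref{lem: iterated return time expansion} produces $B_i\supseteq A_i$ with $\lambda(\tau_{B_i,T_i}\inv(n))=\lambda_n$ for every $n\geq 1$. Since $B_i\supseteq A_i$, the set $B_i$ still meets almost every $T_i$-orbit. The return time distributions of $B_1$ and $B_2$ coincide by construction.

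\textbf{Step 3: measure of $B_i$.} This is the step I expect to need care. We need $\lambda(B_i)=\sum_n\lambda(\tau_{B_i,T_i}\inv(n))$, that is, almost every point of $B_i$ has a finite return time to $B_i$. This holds by conservativity of $T_i$, provided $B_i$ is Borel and has positive measure, which it does because $B_i\supseteq A_i$ and $A_i$ is nonnull. The only subtlety is that $B_i$ might a priori have infinite measure. Even so, the conservativity definition gives $B_i\subseteq\bigcup_{n\geq1}T_i^{-n}(B_i)$ up to a null set, so $B_i=\bigsqcup_n\tau_{B_i,T_i}\inv(n)$ modulo null sets. Hence
\[
\lambda(B_1)=\lambda(B_2)=\sum_n\lambda_n<\varepsilon.
\]

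It would also be reassuring to check that the countable union $B=A\sqcup\bigsqcup_n C_n$ built in Lemma~\ref{lem: iterated return time expansion} really has the stated return times at every level. That is its content, so we may take it as given.
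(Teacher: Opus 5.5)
Your proposal is correct and follows the paper's proof: you take $A_1,A_2$ from Lemma~\ref{lem: small set meeting all orbits}, choose one common strictly dominating sequence $(\lambda_n)$ with $\sum_n\lambda_n<\varepsilon$, and apply Lemma~\ref{lem: iterated return time expansion} to each transformation. Your explicit choice $\lambda_n=\max(a_n^1,a_n^2)+\eta 2^{-n-1}$ and your Step~3, which uses conservativity to get $\lambda(B_i)=\sum_n\lambda_n<\varepsilon$, spell out details the paper leaves implicit.
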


\begin{proof}
	By Lemma~\ref{lem: small set meeting all orbits}, we can 
	fix two Borel sets $A_1$ and $A_2$ of measure $<\frac{\varepsilon}{2}$, 
	meeting almost all $T_1$-orbits and $T_2$-orbits respectively. 
	We then pick a sequence $(\lambda_n)_{n\geq 1}$ such that for all $n\geq 1$, 
	$\lambda_n > \max \left\{  \lambda(\tau\inv_{A_1,T_1}(n)) , 
	\lambda(\tau\inv_{A_2,T_2}(n)) \right\}$ while $\sum_{n} \lambda_n < \varepsilon$.
	We conclude by applying Lemma~\ref{lem: iterated return time expansion} to $T_1$ and $A_1$,
	and then to $T_2$ and $A_2$, both times 
	with this sequence $(\lambda_n)$.
\end{proof}

\subsection{The dissipative absorption lemma}
\label{sec: dissipative absorption}

In order to prove Theorem \ref{thm: approx conj complete}, 
we require the following lemma, which 
allows the dissipative part to ``absorb'' the conservative part.

\begin{lemma}{\label{lem: dissipative part absorbs conservative part}}
    Let $\epsilon>0$ and $T\in\Aut(X,\lambda)$ whose conservative and dissipative parts both have positive measure, and let $D$ be a fundamental domain for the dissipative part of $T$. 
    Then there exists a dissipative $\widetilde{T} \in \Aut(X,\lambda)$
    such that $\lambda(\{ x \in X \colon T(x) \neq \widetilde{T}(x) \})< \epsilon$ and $D$ is also a fundamental domain for $\widetilde T$.
\end{lemma}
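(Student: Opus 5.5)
Write $\mathfrak C=\mathfrak C_T$ and $\mathfrak D=\mathfrak D_T=\bigsqcup_{n\in\Z}T^n(D)$. The plan is to take a small set $B\subseteq\mathfrak C$ meeting every conservative orbit and cut each conservative orbit at $B$ into finite segments. These segments are then spliced, one by one, into the dissipative orbits, between two consecutive points $T^j(y)$ and $T^{j+1}(y)$. The new map $\widetilde T$ will differ from $T$ only on a set of small measure.

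\textbf{Step 1: choosing the cuts.} By Lemma~\ref{lem: small set meeting all orbits} applied to $T_{\restriction\mathfrak C}$ (extended by the identity elsewhere), pick $B\subseteq\mathfrak C$ with $\lambda(B)<\epsilon/3$ meeting almost every conservative orbit. The Kakutani--Rokhlin partition~\eqref{eq: KR partition} then writes $\mathfrak C$ as a disjoint union of finite $T$-segments $x,Tx,\dots,T^{\tau_{B,T}(x)-1}x$ for $x\in B$. Each segment has its bottom in $B$ and its top in $T^{-1}(B)$.

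\textbf{Step 2: choosing the splice points.} Fix a Borel subset $E\subseteq\mathfrak D$ with $\lambda(E)=\lambda(B)$ that meets each $T$-orbit in $\mathfrak D$ at most once, or at most once in any block of consecutive points. For instance, take $E\subseteq D$, which is possible since $\lambda(D)>0$ and $\lambda(B)$ can be made smaller than $\lambda(D)$ by shrinking $\epsilon$. Choose a measure-preserving bijection $\varphi:B\to E$; it exists since both are standard atomless spaces of equal finite measure. The new map is then
\[
\widetilde T(y)=\varphi^{-1}(y)\ \text{ for } y\in E,\qquad
\widetilde T(z)=T(\varphi(T(z)))\ \text{ for } z\in T^{-1}(B),
\]
and $\widetilde T=T$ elsewhere. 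Thus the segment starting at $x=\varphi^{-1}(y)$ is inserted between $y$ and $T(y)$: we go from $y$ to $x$, up the segment to its top $z$, and then from $z$ to $T(y)$.

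\textbf{Step 3: checking the properties.} First, $\widetilde T$ is a bijection. Indeed, it maps $E\to B$ and $T^{-1}(B)\to T(E)$ bijectively, and it coincides with $T$ on the complement $X\setminus(E\cup T^{-1}B)$, which $T$ maps onto $X\setminus(T(E)\cup B)$. Second, $\widetilde T$ preserves $\lambda$, since each piece is a composition of measure-preserving maps. Third,
\[
\lambda(\{T\neq\widetilde T\})\leq\lambda(E)+\lambda(T^{-1}B)=2\lambda(B)<\epsilon.
\]
Finally, every $\widetilde T$-orbit is a former dissipative $T$-orbit with finitely many finite segments inserted between consecutive points, and every conservative segment gets inserted exactly once. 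Hence each $\widetilde T$-orbit still meets $D$ exactly once, so $X=\bigsqcup_n\widetilde T^n(D)$: $\widetilde T$ is dissipative with fundamental domain $D$.

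\textbf{Main obstacle.} One subtlety is that the splice point $y\in E\subseteq D$ lies in $D$ itself. We must make sure the inserted points do not create a second visit to $D$, which holds since the inserted segments lie in $\mathfrak C$. We also need the measure bookkeeping $\lambda(B)\le\lambda(D)$; if necessary, shrink $B$ further using Lemma~\ref{lem: small set meeting all orbits}.

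The step I expect to be the real issue is conservativity of the segments: the segments must be finite, and every point of $\mathfrak C$ must lie in exactly one segment. Both follow from conservativity of $T_{\restriction\mathfrak C}$ together with $B$ meeting all conservative orbits, since then points return to $B$ both forward and backward. The orbits of $\widetilde T$ are then infinite in both directions and contain no cycles.
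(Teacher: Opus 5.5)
Your construction is the paper's. Your $E\subseteq D$ plays the role of $A$, and your map is the paper's $\widetilde T$ with $\phi=\varphi^{-1}$ and the particular choice $\psi=T\circ\varphi\circ T$ on $T^{-1}(B)$. Bijectivity, measure preservation and the bound $2\lambda(B)<\epsilon$ are all fine.

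\textbf{The gap.} Your verification that $D$ is a fundamental domain rests on a description of the orbits that is false for the map you wrote down. Let $z\in T^{-1}(B)$ be the top of the segment with base $x=\varphi^{-1}(y)$. Then $T(z)=T^{\tau_{B,T}(x)}(x)$ is the base $x'$ of the \emph{next} segment on the conservative orbit, not $x$; indeed $x'\neq x$ by aperiodicity. So $\widetilde T(z)=T(\varphi(x'))$, and $y'=\varphi(x')$ is in general a different point of $E$, usually on a different dissipative orbit. The segment is therefore not inserted between $y$ and $T(y)$. Instead, the $\widetilde T$-orbit of $y$ is the backward ray $\{T^{-n}(y)\}_{n\geq 0}$, then the segment of $x$, then the forward ray $\{T^{n}(y')\}_{n\geq 1}$. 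Your claim that every $\widetilde T$-orbit is an old dissipative orbit with segments inserted is thus wrong.

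\textbf{First repair: keep your formula and argue as the paper does.} The paper's argument works for an arbitrary measure-preserving bijection $\psi\colon T^{-1}(B)\to T(E)$. Every $\widetilde T$-orbit is a chain of the form ``backward ray of some $y\in D$, then (if $y\in E$) one segment, then the forward ray of some $y'\in E$''. These chains partition $X$, because $\varphi^{-1}\colon E\to B$ and $x\mapsto \varphi(T^{\tau_{B,T}(x)}x)$, $B\to E$, are bijections. The second map is a bijection because the first return map to $B$ is a.e.\ invertible by conservativity. Each chain is infinite in both directions and meets $D$ exactly once, since the segments lie in $\mathfrak C_T$ and the rays $\{T^n(y')\}_{n\geq1}$ avoid $D$. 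The paper packages this as: negative $\widetilde T$-translates of $D$ coincide with negative $T$-translates, giving at most one visit, and then the $\widetilde T$-saturation of $D$ is shown to be all of $X$.

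\textbf{Second repair: change the formula so your insertion picture becomes true.} Set $\widetilde T(z)=T(\varphi(x_z))$, where $x_z=T^{-k}(z)$ with $k\geq 0$ minimal such that $T^{-k}(z)\in B$; this is the base of the segment containing $z$. The map $z\mapsto x_z$ is a measure-preserving bijection $T^{-1}(B)\to B$, since its inverse $x\mapsto T^{\tau_{B,T}(x)-1}(x)$ is piecewise a power of $T$. With this $\widetilde T$, your bijectivity and measure computations go through unchanged. Each segment is then genuinely spliced between $y$ and $T(y)$, and your final argument becomes correct.
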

\begin{proof}
    We obtain $\widetilde T$ by patching together the 
	dissipative part and the conservative part of $T$ (see Figure \ref{fig: absorption}
	for an illustrative example).
	
	Let $D$ be a fundamental domain of the dissipative part of $T$.
	By Lemma~\ref{lem: small set meeting all orbits} we find 
	$B\subseteq \mathfrak C_T$ such that  $\lambda(B)< \frac\epsilon2$, 
	$\lambda(B)\leq\lambda(D)$,
	and $B$ intersects every orbit of the conservative part $\mathfrak C_T$ of $T$. 
	We then fix $A\subseteq D$ such that  $\lambda(A)=\lambda(B)$.
	Finally, let $\phi: A\to B$ and $\psi: T\inv(B)\to T(A)$ be measure-preserving bijections. 

	\begin{figure}[H]
		\centering
		\includegraphics[scale=0.57]{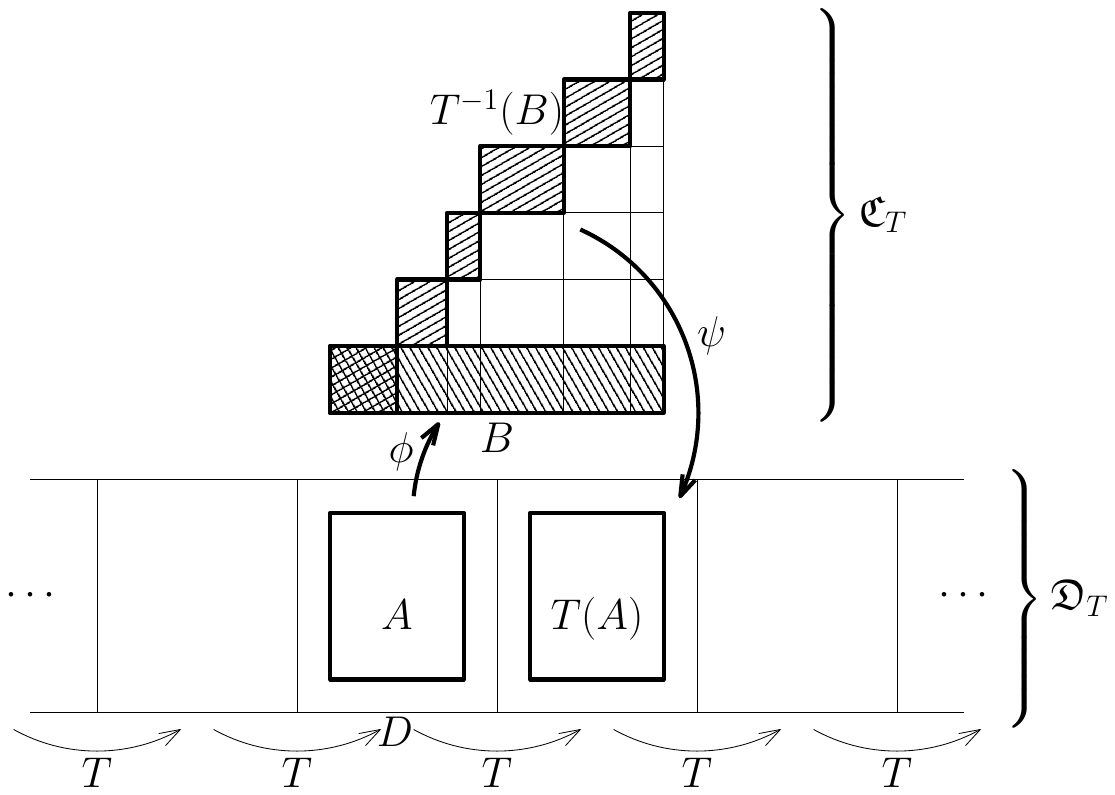}
			\captionsetup{margin=1.3cm}
		\caption{
			An example of maps $\phi,\psi$ that 
			are used to modify the dynamics of $T$. 
			The base $B$ of the Kakutani-Rokhlin partition of $\mathfrak{C}_T$ is the hatched part
			and $T\inv(B)$ is the (differently hatched) top of the towers. 
		}\label{fig: absorption}
	\end{figure}

	We define $\widetilde T$ as follows: 
	\[
	\widetilde{T} (x) = \left\lbrace \begin{array}{cl}
		T(x) &  \mbox{ if }  x \in X\setminus A\sqcup T\inv(B), \\
		\phi(x) & \mbox{ if }  x \in A, \\
		\psi(x) & \mbox{ if }   x \in T\inv(B).
	\end{array}
	\right.
	\]

	Let us check that $\widetilde T$ is dissipative with fundamental domain $D$. Since $D$ is a fundamental domain of the 
	dissipative part of $T$
	and since $\widetilde T$ and $T$ coincide on $\bigsqcup_{n\leq 0}T^n(D)$,
	all the negative $\widetilde T$-translates of $D$ are disjoint from each other. 
	Hence $D$ intersects every $\widetilde T$-orbit in at most one point,
	and $D$ intersects the $\widetilde T$-orbit of every element of $\bigsqcup_{n\leq 0}T^n(D)$.
	Denote by $Y$ the $\widetilde T$-saturation of $D$, we have to show $Y=X$, and for now we know
	that $Y\supseteq \bigsqcup_{n\leq 0}T^n(D)$.
	Since $\widetilde T$ and $T$ coincide on $\bigsqcup_{n\geq 0}T^n(D\setminus A)$, 
	we further have $Y\supseteq \bigsqcup_{n\geq 1}T^n(D\setminus A)$.
	Now because  $\widetilde T(A)=B$, we have $B\subseteq Y$.
	If $x\in\mathfrak C_T\setminus B$, let $k\geq 1$ be the first integer such that $T^{-k}(x)\in B$. Then
	since $T\inv$ and  $\widetilde T\inv$ coincide on $\mathfrak C_T\setminus B$, 
	we have $\widetilde T^{-k}(x)\in B$, which yields the inclusion $\mathfrak{C}_T \subseteq Y$. 
	Finally, by construction 
	$\bigsqcup_{n\geq 1}\widetilde T^n(T\inv B)=\bigsqcup_{n\geq 1}T^n(A)$ so 
	$Y\supseteq \bigsqcup_{n\geq 1}T^n(A)$ so $Y=X$, which concludes the proof.
\end{proof}

\subsection{Proof of Theorem \ref{thm: approx conj complete}}
\label{sec: proof of main for real this time}

Before proving the main result, we isolate one last lemma which shows that 
the condition on the measure of the dissipative fundamental domains is 
necessary for $\lambda$-approximate 
conjugacy to hold. 

\begin{lemma}
	{\label{lem: condition on measure of dissipative fundamental domains}}
	Let $T_1\in\Aut(X,\lambda)$, let $d_1$ be the measure of a fundamental domain of its dissipative part, 
	let $\epsilon>0$, and $T_2\in\Aut(X,\lambda)$ such that 
	$\lambda(\{x\in X\mid T_1(x)\neq  T_2(x)\})\leq \epsilon$. 
	Then $T_2$ has a dissipative fundamental domain of measure $\geq d_1-\epsilon$.
\end{lemma}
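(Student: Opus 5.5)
Let $D_1$ be a fundamental domain of the dissipative part $\mathfrak D_{T_1}$ of $T_1$, so $\lambda(D_1)=d_1$, and let $E=\{x\in X\colon T_1(x)\neq T_2(x)\}$, so $\lambda(E)\leq\epsilon$. If $d_1\leq\epsilon$ there is nothing to prove, so assume $d_1>\epsilon$. The idea is to find a set of points of $D_1$ whose whole $T_1$-orbit avoids $E$, and to show that this set is wandering for $T_2$, i.e. a piece of a dissipative fundamental domain of $T_2$ of measure $\geq d_1-\epsilon$.

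\textbf{Step 1: orbits meeting $E$.} Each $T_1$-orbit in $\mathfrak D_{T_1}$ meets $D_1$ exactly once. Consider the map $\pi\colon\mathfrak D_{T_1}\to D_1$ sending $x$ to the unique point of its $T_1$-orbit lying in $D_1$. The set $D_1'$ of points of $D_1$ whose $T_1$-orbit meets $E$ is $\pi(E\cap\mathfrak D_{T_1})$. Partition $E\cap\mathfrak D_{T_1}$ as $\bigsqcup_{n\in\Z}E\cap T_1^n(D_1)$; the image of the $n$-th piece is $T_1^{-n}(E\cap T_1^n(D_1))$, which has the same measure. Hence
\[
\lambda(D_1')\leq\sum_{n\in\Z}\lambda(E\cap T_1^n(D_1))\leq\lambda(E)\leq\epsilon .
\]
Let $F=D_1\setminus D_1'$, so $\lambda(F)\geq d_1-\epsilon$.

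\textbf{Step 2: $F$ is wandering for $T_2$.} For $x\in F$, the whole $T_1$-orbit of $x$ avoids $E$. An induction in both directions then shows $T_2^n(x)=T_1^n(x)$ for all $n\in\Z$. For positive $n$, this holds because $T_1$ and $T_2$ agree at $T_1^{n}(x)\notin E$. For negative $n$, it holds because $T_2(T_1^{n-1}x)=T_1(T_1^{n-1}x)$ since $T_1^{n-1}x\notin E$, so that $T_2^{-1}(T_1^n x)=T_1^{n-1}x$. Consequently the sets $T_2^n(F)=T_1^n(F)$, $n\in\Z$, are pairwise disjoint, so $F$ is a wandering set for $T_2$ and its $T_2$-saturation $Z=\bigsqcup_n T_2^n(F)$ is a $T_2$-invariant set on which $T_2$ is dissipative with fundamental domain $F$.

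\textbf{Step 3: comparison with the fundamental domain of $\mathfrak D_{T_2}$.} By maximality in the Hopf decomposition, $Z\subseteq\mathfrak D_{T_2}$ up to a null set. Let $D_2$ be a fundamental domain of $\mathfrak D_{T_2}$; we need $\lambda(D_2)\geq\lambda(F)$. Translating piecewise, the set $\bigsqcup_n T_2^{-n}(T_2^n(D_2)\cap F)$ lies in $D_2$ and has measure $\sum_n\lambda(T_2^n(D_2)\cap F)$. This sum equals $\lambda(F)$, since $F\subseteq\mathfrak D_{T_2}=\bigsqcup_n T_2^n(D_2)$. Thus $\lambda(D_2)\geq\lambda(F)\geq d_1-\epsilon$.

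The main (though modest) obstacle is the measure count in Step 1, namely that projecting $E$ onto $D_1$ along orbits does not increase measure. This rests on the map being countably piecewise a measure-preserving translate. The rest is bookkeeping plus the standard fact that wandering sets lie in the dissipative part.
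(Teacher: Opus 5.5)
Your proof is correct and follows the same route as the paper's. You remove the points of $D_1$ whose $T_1$-orbit meets $\{T_1\neq T_2\}$, a set of measure at most $\epsilon$; you note that $T_1$ and $T_2$ agree along the remaining orbits, so what is left is a $T_2$-wandering set inside a dissipative fundamental domain of $T_2$. You also spell out what the paper leaves implicit: the piecewise-translation measure count, the two-sided induction, and the comparison with an arbitrary fundamental domain $D_2$ of $\mathfrak D_{T_2}$, where the pieces $T_2^{-n}(T_2^n(D_2)\cap F)$ are pairwise disjoint because $F$ is $T_2$-wandering.
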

\begin{proof}
	Let $D_1$ be a fundamental domain for the dissipative part of $T_1$,
	then let $C_1$ denote the set of $x\in D_1$ whose $T_1$-orbit intersects the set 
	$A\coloneqq\{x\in X\mid T_1(x)\neq T_2(x)\}$,
	which satisfies $\lambda(A)\leq \epsilon$ by assumption. 
	Since $T_1$ is measure-preserving and $C_1$ is contained both in the fundamental domain $D_1$ 
	and in the $T_1$-saturation of $A$,
	we then have $\lambda(C_1)\leq\lambda(A)\leq \epsilon$ and thus $\lambda(D_1\setminus C_1)\geq d_1-\epsilon$.
	By construction $T_1$ and $T_2$ coincide on the $T_1$ orbit of any $x\in D_1\setminus C_1$ so $D_1\setminus C_1$ 
	is contained in a fundamental domain for 
	the dissipative part of $T_2$, which finishes the proof.
\end{proof}

\begin{proof}[Proof of Theorem~\ref{thm: approx conj complete}]
	We begin by the direct implication: assume that $T_1$ and $T_2$ are $\lambda$-approximately conjugate.
	Let $d_1$ and $d_2$ denote the respective measures of fundamental domains for the dissipative parts of $T_1$ and $T_2$.
	Then by Lemma~\ref{lem: condition on measure of dissipative fundamental domains} we have $d_1\leq d_2$, but then by symmetry $d_2\leq d_1$ so  that $d_1=d_2$. 

	Let us now show the converse. Denote by $d$ the common measure of a fundamental domain of $T_1$ and $T_2$.
	
	Let us begin with the case $d=0$.
	We apply Theorem~\ref{thm: return time distribution}
	and thus get $B_1$, $B_2$ meeting almost all $T_1$ and $T_2$
	orbits respectively, with $\lambda(B_1)=\lambda(B_2)<\epsilon$, and such that 
	for all $n\in\N^*$ we have $\lambda(\tau\inv_{B_1,T_1}(n)) = \lambda(\tau\inv_{B_2,T_2}(n))$. 
	Observe that since $B_1$ meets all $T_1$-orbits, we have a Kakutani-Rokhlin partition of $X$ given by: 
	\[
	X=\bigsqcup_{n\geq 1}\bigsqcup_{k=0}^{n-1}T_1^k(\tau_{B_1,T_1}\inv(n)),
	\]
	and similarly for $T_2$: 
	\[
	X=\bigsqcup_{n\geq 1}\bigsqcup_{k=0}^{n-1}T_2^k(\tau_{B_2,T_2}\inv(n)),
	\]
	Let us fix a measure-preserving bijection $\varphi: B_1 \to B_2$ satisfying $\varphi(\tau\inv_{B_1,T_1}(n)) = \tau\inv_{B_2,T_2}(n)$ for any $n \in \N^\ast$. 
	We extend it to a tower-preserving element $S$ of $\Aut(X,\lambda)$ 
	by letting for, for all $n\in\N^\ast$, all
	$x\in \tau_{B_1,T_1}\inv(n)$ and all $k\in\{0,\dots,n-1\}$
	\[
	S(T_1^k(x))=T_2^k(\varphi(x)). 
	\]
	By construction, we always have $ST_1(x)=T_2S(x)$, except when $x$ is on the top of some $T_1$-tower, \textit{i.e.} when 
	$\displaystyle x\in \bigsqcup_{n \geq 1} T_1^{n-1}(\tau_{B_1,T_1}\inv(n))$. So 
	
	\begin{align*}
	\lambda(\{x\in X\colon ST_1S\inv(x)\neq T_2(x)\}) & \leq 
	\sum_{n=1}^\infty \lambda(T_1^{n-1}(\tau_{B_1,T_1}\inv(n))) \\
	&= \sum_{n=1}^\infty \lambda(\tau_{B_1,T_1}\inv(n)))=\lambda(B_1)<\epsilon,
	\end{align*}
	as desired. \\

	We are now left with the case $d>0$ (it can be infinite). 
	If both $T_1$ and $T_2$ are dissipative, then they are actually conjugate, in particular they are $\lambda$-approximately conjugate.
	By transitivity of $\lambda$-approximate conjugacy, we may then assume without loss of generality
	that $T_1$ is dissipative but $T_2$ is not: its conservative part has positive measure.
	
	Let us now fix $\epsilon>0$ and apply Lemma \ref{lem: dissipative part absorbs conservative part} so as to obtain $\widetilde T_2$ dissipative which shares a fundamental domain with $T_2$
	and satisfies
	\[
	\lambda(\{x\in X\colon  T_2(x)\neq \widetilde T_2(x)\})<\epsilon.
    \]
	Since $T_1$ and $\widetilde T_2$ are both dissipative with 
	a fundamental domain of measure $d>0$, they are conjugate, so there is $S\in\Aut(X,\lambda)$ such that 
	$\lambda(\{x\in X\colon  T_2(x)\neq ST_1S\inv(x)\})<\epsilon$.
	Since this can be done for all $\epsilon>0$, 
	we conclude that $T_2$ and $T_1$ are $\lambda$-approximately conjugate
	as wanted, which concludes the proof.
\end{proof}

\section{Beyond \texorpdfstring{$\Z$}{Z}-actions}\label{sec: questions amenable}

In the probability context, given an infinite amenable group $\Gamma$, Ornstein and 
Weiss' quasi-tiling machinery yields that 
all free probability measure-preserving $\Gamma$-actions on $(X,\mu)$ are
$\mu$-approximately conjugate \cite{ornsteinEntropyisomorphismtheorems1987}. 
More generally, it is a result of Elek that given any infinite countable group $\Gamma$, 
any two hyperfinite probability measure-preserving $\Gamma$-actions
are $\mu$-approximately conjugate if and only if they share the same invariant random subgroup
\cite[Thm.~9]{elekFinitegraphsamenability2012} 
(see also \cite[Sec.~2]{giraudHyperfiniteMeasurepreservingActions2022} for a recent 
more direct proof). 

In the infinite measure-preserving world however, 
our work shows that even for $\Gamma=\Z$, the distribution of point stabilizers 
does not provide the whole information 
for $\lambda$-approximate conjugacy since dissipativity is not detected. 
Trying to extend our main result to other amenable groups, one way around this problem 
might be to require \emph{elementwise conservativity} of the action, 
as recently introduced by Glasner and Lederle \cite{glasnerStrongSubgroupRecurrence2025}.

\begin{question}
	Let $\Gamma$ be an infinite amenable group. Is it true that all elementwise 
	conservative free $\Gamma$-actions on a standard infinite measure space 
	$(X,\lambda)$ are $\lambda$-approximately conjugate, namely for any two such actions 
	$\alpha_1$, $\alpha_2$, all $F\subseteq \Gamma$ finite and all $\epsilon>0$, 
	is there $S\in\Aut(X,\lambda)$ such that for all $\gamma\in F$, 
	\[
	\lambda(\{x\in X\colon S\alpha_1(\gamma)S\inv x\neq\alpha_2(\gamma)(x)\})<\epsilon?
	\]
\end{question}
 We don't know the answer to this question even for $\Gamma=\Z^2$ 
or $\Gamma$ virtually $\Z$.

\listoffixmes  %%%% liste tous les commentaires

%%%% Bibliographie %%%%

\bibliographystyle{alphaurl}
\bibliography{bib}

\end{document}